\newtheorem{theorem}{Theorem}[section]
\newtheorem{lemma}[theorem]{Lemma}
\newtheorem{corollary}[theorem]{Corollary}
\theoremstyle{definition}
\theoremstyle{remark}
\newtheorem{remark}[theorem]{Remark}
\def\@author#1{\g@addto@macro\elsauthors{\normalsize%
    \def\baselinestretch{1}%
    \upshape\authorsep#1\unskip\textsuperscript{%
      \ifx\@fnmark\@empty\else\unskip\sep\@fnmark\let\sep=,\fi
      \ifx\@corref\@empty\else\unskip\sep\@corref\let\sep=,\fi
      }%
    \def\authorsep{\unskip,\space}%
    \global\let\@fnmark\@empty
    \global\let\@corref\@empty  
    \global\let\sep\@empty}%
    \@eadauthor={#1}
}
\journal{}
\begin{document}

\begin{frontmatter}



\title{Convolution identities for scale transformations of Appell polynomials}


\author{Jos\'{e} A. Adell\corref{a1}\fnref{thanks}}
\ead{adell@unizar.es}

\author{Alberto Lekuona\fnref{thanks}}
\ead{lekuona@unizar.es}

\cortext[a1]{Corresponding author.}
\fntext[thanks]{The authors are partially supported by Research Projects DGA (E-64), MTM2015-67006-P, and by FEDER funds.}

\address{Departamento de M\'{e}todos Estad\'{\i}sticos, Facultad de
Ciencias, Universidad de Zaragoza, 50009 Zaragoza (Spain)}

\begin{abstract}
We obtain closed form expressions for convolutions of scale transformations within a certain subset of Appell polynomials. This subset contains the Bernoulli, Apostol-Euler, and Cauchy polynomials, as well as various kinds of their generalizations, among others. We give a unified approach mainly based on a probabilistic generalization of the Stirling numbers of the second kind. Different illustrative examples, including reformulations of convolution identities already known in the literature, are discussed in detail.
\end{abstract}

\begin{keyword}
Convolution identity \sep Appell polynomials \sep binomial convolution \sep generalized Stirling numbers of the second kind \sep Bernoulli polynomials \sep random vector.
\end{keyword}

\end{frontmatter}


\section{Introduction}\label{s1}

In recent years, a lot of attention has been devoted to obtain explicit formulas for higher-order convolutions of the form
\begin{equation}\label{eq1.1}
    \sum_{j_1+\cdots +j_m=n} \binom{n}{j_1,\ldots ,j_m}C(j_1,\ldots ,j_m) A_{j_1}^{(1)}(x_1)\cdots A_{j_m}^{(m)}(x_m),
\end{equation}
where $A^{(k)}(x)=(A_n^{(k)}(x))_{n\geq 0}$ is a sequence of Appell polynomials, $x_k\in \mathds{R}$, $k=1,\ldots ,m$, and $C(j_1,\ldots ,j_m)$ are appropriate constants. Such formulas generalize the classical identity of second order for the Bernoulli polynomials $B(x)=(B_n(x))_{n\geq 0}$ shown by N\"orlund \cite{Nor1922}, i.e.,
\begin{equation}\label{eq1.2}
    \sum_{k=0}^n \binom{n}{k}B_k(x)B_{n-k}(y)=-n(x+y-1)B_{n-1}(x+y)-(n-1)B_n(x+y).
\end{equation}
Since the pioneering work by Dilcher \cite{Dil1996}, many authors have provided explicit expressions for the sums in \eqref{eq1.1} using different methodologies. We mention the papers by Gessel \cite{Ges2005}, Wang \cite{Wan2013}, Agoh and Dilcher \cite{AgoDil2014}, He and Araci \cite{HeAra2014}, Wu and Pan \cite{WuPan2014}, He \cite{He2017}, Dilcher and Vignat \cite{DilVig2016}, and Komatsu and Simsek \cite{KomSim2016}, among others. As it happens in \eqref{eq1.2}, a common feature of the identities for the sums in \eqref{eq1.1} usually found in the literature is that the right-hand side of such identities contains some of the polynomials $A^{(k)}(x)$ themselves.

The aim of this paper is to give a unified approach to obtain closed form expressions for higher-order convolutions of Appell polynomials in the set $\mathcal{R}$ defined below. This approach can be summarized as follows (see Section~\ref{s2} for more precise definitions). Following Ta \cite{Ta2015}, we consider the set $\mathcal{R}$ of Appell polynomials $A(x)=(A_n(x))_{n\geq 0}$ whose generating function is given by
\begin{equation*}
    G(A(x),z)=\dfrac{e^{xz}}{\mathds{E}e^{zY}},
\end{equation*}
for a certain random variable $Y$, where $\mathds{E}$ stands for mathematical expectation. For any $w\in \mathds{R}$ and $A(x)\in \mathcal{R}$, we consider the scale transformation $T_wA(x)=(T_w A_n(x))_{n\geq 0}$ defined by
\begin{equation*}
    T_w A_n(x)=w^n A_n(x/w)=\sum_{k=0}^n \binom{n}{k}w^k A_k(0)x^{n-k},\quad n=0,1,\ldots.
\end{equation*}
In first place, we give closed form expressions for
\begin{equation}\label{eq1.3}
    \begin{split}
       & \sum_{j_1+\cdots +j_m=n} \binom{n}{j_1,\ldots ,j_m} T_{w_1}A_{j_1}^{(1)}(w_1x_1)\cdots  T_{w_m}A_{j_m}^{(m)}(w_mx_m)\\
        & =\sum_{j_1+\cdots +j_m=n} \binom{n}{j_1,\ldots ,j_m} w_1^{j_1}\cdots w_m^{j_m} A_{j_1}^{(1)}(x_1)\cdots  A_{j_m}^{(m)}(x_m)
    \end{split}
\end{equation}
in terms of the moments of the random variables $Y_k$ associated to each $A^{(k)}(x)\in \mathcal{R}$, $k=1,\ldots ,m$ (see Theorem~\ref{th4} in Section~\ref{s3}). In second place, we replace $(w_1,\ldots ,w_m)$ by a random vector $\mathds{W}=(W_1,\ldots ,W_m)$ in \eqref{eq1.3} and then take expectations, so that we obtain closed form expressions for
\begin{equation}\label{eq1.4}
    \sum_{j_1+\cdots +j_m=n} \binom{n}{j_1,\ldots ,j_m} \mathds{E} \left( W_1^{j_1}\cdots W_m^{j_m} \right )A_{j_1}^{(1)}(x_1)\cdots A_{j_m}^{(m)}(x_m).
\end{equation}
This is done in Theorem~\ref{th5} in Section~\ref{s3}, which is the main result of this paper. The comparison between \eqref{eq1.1} and \eqref{eq1.4} reveals the probabilistic meaning of the constant $C(j_1,\ldots ,j_m)$, that is,
\begin{equation*}
    C(j_1,\ldots ,j_m)=\mathds{E} \left ( W_1^{j_1}\cdots W_m^{j_m} \right ).
\end{equation*}

The approach outlined above is general enough for two reasons. First, because the Bernoulli, Apostol-Euler, and Cauchy polynomials, as well as various kinds of their generalizations belong to the set $\mathcal{R}$ (c.f. \cite{Ta2015} and \cite{AdeLek2017}). And second, because each choice of the random vector $\mathds{W}$ in \eqref{eq1.4} leads us to a different type of convolution identity.

The main tool to give explicit expressions for the sums in \eqref{eq1.3} is a probabilistic generalization of the Stirling numbers of the second kind recently introduced in \cite{AdeLek2018} (see also Theorems~\ref{th1} and \ref{th3} in Section~\ref{s3}). The final section is devoted to illustrate Theorem~\ref{th5}. To keep the paper in a moderate size, we restrict our attention to the case in which every $A^{(j)}(x)$ in \eqref{eq1.4} are the Bernoulli polynomials. As a counterpart, we consider different choices of the random vector $\mathds{W}$ and make a comparison with similar results already known in the literature. In this respect, the main difference is that the identities obtained in Section~\ref{s4} are given in terms of the classical Stirling numbers of the second kind, instead of the Bernoulli polynomials or numbers themselves. In other words, we obtain convolution identities easy to compute.

\section{Preliminaries}\label{s2}

In this section, we collect some definitions and properties, already shown in \cite{AdeLek2017} and \cite{AdeLek2018}, which are necessary to state our main results.

Let $\mathds{N}$ be the set of  positive integers and $\mathds{N}_0=\mathds{N}\cup \{0\}$. Unless otherwise specified, we assume from now on that $n\in \mathds{N}_0$, $m\in \mathds{N}$, $x\in \mathds{R}$, and $z\in \mathds{C}$ with $|z|\leq r$, where $r>0$ may change from line to line. Denote by $\mathcal{G}$ the set of all real sequences $\boldsymbol{u}=(u_n)_{n\geq 0}$ such that $u_0\neq 0$ and
\begin{equation}\label{eq2.A}
\sum_{n=0}^\infty |u_n|\dfrac{r^n}{n!}<\infty,
\end{equation}
for some radius $r>0$. If $\boldsymbol{u}\in \mathcal{G}$, we denote its generating function by
\begin{equation*}
G(\boldsymbol{u},z)=\sum_{n=0}^\infty u_n \dfrac{z^n}{n!}.
\end{equation*}
If $\boldsymbol{u},\boldsymbol{v}\in \mathcal{G}$, the binomial convolution of $\boldsymbol{u}$ and $\boldsymbol{v}$, denoted by $\boldsymbol{u}\times \boldsymbol{v}=((u\times v)_n)_{n\geq 0}$, is defined as
\begin{equation}\label{eq2.5}
(u\times v)_n=\sum_{k=0}^n \binom{n}{k}u_k v_{n-k}.
\end{equation}
It turns out that this definition is characterized in terms of generating functions (c.f. \cite[Proposition 2.1]{AdeLek2017}) as
\begin{equation}\label{eq2.6}
G(\boldsymbol{u}\times \boldsymbol{v},z)=G(\boldsymbol{u},z)G(\boldsymbol{v},z).
\end{equation}
In addition (cf. \cite[Corollary 2.2]{AdeLek2017}), $(\mathcal{G},\times)$ is an abelian group with identity element $\boldsymbol{e}=(e_n)_{n\geq 0}$, where $e_0=1$ and $e_n=0$, $n\in \mathds{N}$. Observe that if $\boldsymbol{u}^{(j)}=(u_n^{(j)})_{n\geq 0}\in \mathcal{G}$, $j=1,\ldots ,m$, then
\begin{equation}\label{eq2.7}
    (u^{(1)}\times \cdots \times u^{(m)})_n= \sum_{j_1+\cdots +j_m=n}\binom{n}{j_1,\ldots ,j_m}u_{j_1}^{(1)}\cdots u_{j_m}^{(m)},
\end{equation}
where
\begin{equation*}
    \binom{n}{j_1,\ldots ,j_m}=\dfrac{n!}{j_1!\cdots j_m!},\quad j_1,\ldots ,j_m \in \mathds{N}_0,\quad j_1+\cdots +j_m=n
\end{equation*}
is the multinomial coefficient.

On the other hand, let $A(x)=(A_n(x))_{n\geq 0}$ be a sequence of polynomials such that $A(0)\in \mathcal{G}$. Recall that $A(x)$ is called an Appell sequence if one of the following equivalent conditions is satisfied
\begin{equation}\label{eq2.8}
A'_n(x)=nA_{n-1}(x),\quad n\in \mathds{N},
\end{equation}
\begin{equation}\label{eq2.9}
A_n(x)=\sum_{k=0}^n \binom{n}{k}A_k(0)x^{n-k},
\end{equation}
or
\begin{equation}\label{eq2.10}
G(A(x),z)=G(A(0),z)e^{xz}.
\end{equation}
Denote by $\mathcal{A}$ the set of all Appell sequences. The binomial convolution of $A(x),C(x)\in \mathcal{A}$, denoted by $(A\times C)(x)= ((A\times C)_n(x))_{n\geq 0}$, is defined as (cf. \cite[Section 3]{AdeLek2017})
\begin{equation*}
(A\times C)(x)=A(0)\times C(x)=A(x)\times C(0)=A(0)\times C(0)\times I(x).
\end{equation*}
Equivalently, by
\begin{equation*}
\begin{split}
&(A\times C)_n(x)=\sum_{k=0}^n \binom{n}{k}A_k(0)C_{n-k}(x)=\sum_{k=0}^n \binom{n}{k}C_k(0)A_{n-k}(x)\\
&=\sum_{j_1+j_2+j_3=n}\binom{n}{j_1,j_2,j_3} A_{j_1}(0)C_{j_2}(0)x^{j_3}.
\end{split}
\end{equation*}
As shown in \cite[Theorem 3.1]{AdeLek2017}, $(\mathcal{A},\times)$ is an abelian group with identity element $I(x)=(x^n)_{n\geq 0}$. Also, $(A\times C)(x)$ is characterized by its generating function
\begin{equation}\label{eq2.11}
G((A\times C)(x),z)=G(A(0),z)G(C(0),z)e^{xz}.
\end{equation}

For any $A^{(j)}(x)\in \mathcal{A}$ and $x_j\in \mathds{R}$, $j=1,\ldots ,m$, with $x_1+\cdots +x_m=x$, formula \eqref{eq2.11} implies that
\begin{equation}\label{eq2.12}
    A^{(1)}(x_1)\times \cdots \times A^{(m)}(x_m)=(A^{(1)}\times \cdots \times A^{(m)})(x),
\end{equation}
because both sides in \eqref{eq2.12} have the same generating function.

On the other hand, let $w\in \mathds{R}$ and $A(x)\in \mathcal{A}$. We define the scale transformation $T_wA(x)=(T_wA_n(x))_{n\geq 0}$ as
\begin{equation}\label{eq2.13}
    T_w A_n(x)=w^n A_n(x/w)=\sum_{k=0}^n \binom{n}{k} w^k A_k(0)x^{n-k},\quad w\neq 0,
\end{equation}
and
\begin{equation}\label{eq2.14}
    T_0A_n(x)=A_0(0)x^n.
\end{equation}
As shown in \cite[Proposition~4.1]{AdeLek2017}, $T_w A(x)$ is an Appell sequence characterized by its generating function
\begin{equation}\label{eq2.15}
    G(T_wA(x),z)=G(A(0),wz)e^{xz}.
\end{equation}
In addition, the map $T_w:\mathcal{A}\to \mathcal{A}$ is an isomorphism, whenever $w\neq 0$.

From now on, we will always consider random variables $Y$ fulfilling the integrability condition
\begin{equation}\label{eq2.16}
    \mathds{E}e^{r|Y|}<\infty,
\end{equation}
for some $r>0$. Also, let $(Y_j)_{j\geq 1}$ be a sequence of independent copies of $Y$ and denote by
\begin{equation}\label{eq2.17}
    S_k=Y_1+\cdots +Y_k,\quad k\in \mathds{N}\quad (S_0=0).
\end{equation}
In \cite{AdeLek2018}, we have introduced the Stirling polynomials of the second kind associated to $Y$ as
\begin{equation}\label{eq2.18}
    S_Y(n,r;x)=\dfrac{1}{r!}\sum_{k=0}^r \binom{r}{k}(-1)^{r-k}\mathds{E}(x+S_k)^n,\quad r=0,1,\ldots ,n,
\end{equation}
as well as the Stirling numbers of the second kind associated to $Y$ as
\begin{equation}\label{eq2.19}
    S_Y(n,r)=S_Y(n,r;0),\quad r=0,1,\ldots ,n.
\end{equation}
Equivalently (c.f. \cite[Theorem~3.3]{AdeLek2018}), the polynomials $S_Y(n,r;x)$ are defined via their generating function as
\begin{equation}\label{eq2.20}
    \dfrac{e^{zx}}{r!}(\mathds{E}e^{zY}-1)^r=\sum_{n=r}^\infty \dfrac{S_Y(n,r;x)}{n!}\, z^n,\quad r\in \mathds{N}_0.
\end{equation}
Note that if $Y=1$, we have from \eqref{eq2.19} or \eqref{eq2.20}
\begin{equation*}
    S_1(n,r)=S(n,r),\quad r=0,1,\ldots ,n,
\end{equation*}
$S(n,r)$ being the classical Stirling numbers of the second kind. Explicit expressions for $S_Y(n,r;x)$ for various choices of the random variable $Y$ can be found in \cite[Section~4]{AdeLek2018}.

\section{Main results}\label{s3}

We consider the subset $\mathcal{R}\subseteq \mathcal{A}$ of Appell sequences $A(x)$ whose generating function is given by
\begin{equation}\label{eq3.21}
    G(A(x),z)=\dfrac{e^{xz}}{\mathds{E}e^{zY}},
\end{equation}
for a certain random variable $Y$ fulfilling \eqref{eq2.16}. Note that if $X$ is another random variable satisfying \eqref{eq3.21}, then $X$ and $Y$ have the same law (see, for instance, Billingsley \cite[p. 346]{Bil1995}). For this reason, we say that $A(x)$ has associated random variable $Y$.

It turns out that any Appell sequence $A(x)$ in $\mathcal{R}$ with associated random variable $Y$ can be written in terms of the Stirling polynomials of the second kind associated to $Y$ or in terms of the moments of the random variables $S_k$ defined in \eqref{eq2.17}, as the following result shows.

\begin{theorem}\label{th1}
Let $A(x)\in \mathcal{R}$ with associated random variable $Y$. Then,
\begin{equation}\label{eq3.22}
    \begin{split}
       & A_n(x)=\sum_{r=0}^n (-1)^r r! S_Y(n,r;x)= \sum_{k=0}^n \binom{n+1}{k+1}(-1)^k\mathds{E}(x+S_k)^n \\
        & = \sum_{r=0}^n \binom{n}{r} x^{n-r}\sum_{k=0}^r \binom{r+1}{k+1}(-1)^k\mathds{E}S_k^r.
    \end{split}
\end{equation}
\end{theorem}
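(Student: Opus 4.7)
My plan is to chain three elementary identities, starting from the generating function.

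\textbf{First equality.} Since $A(x) \in \mathcal{R}$, $G(A(x),z) = e^{xz}/\mathds{E}e^{zY}$. For $|z|$ small enough, $|\mathds{E}e^{zY}-1| < 1$, so I would expand
\begin{equation*}
\frac{1}{\mathds{E}e^{zY}} = \frac{1}{1+(\mathds{E}e^{zY}-1)} = \sum_{r=0}^\infty (-1)^r (\mathds{E}e^{zY}-1)^r,
\end{equation*}
multiply by $e^{xz}$, and invoke \eqref{eq2.20} to recognize each summand as $(-1)^r r! \sum_{n\geq r} S_Y(n,r;x) z^n/n!$. Interchanging the two sums (justified in the disk of convergence) and matching coefficients of $z^n/n!$ yields $A_n(x) = \sum_{r=0}^n (-1)^r r! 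S_Y(n,r;x)$.

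\textbf{Second equality.} I would plug the definition \eqref{eq2.18} into the first equality. Using $(-1)^r \cdot (-1)^{r-k} = (-1)^k$, this gives
\begin{equation*}
A_n(x) = \sum_{r=0}^n \sum_{k=0}^r \binom{r}{k} (-1)^k \mathds{E}(x+S_k)^n.
\end{equation*}
Swapping the order of summation and applying the hockey-stick identity $\sum_{r=k}^n \binom{r}{k} = \binom{n+1}{k+1}$ produces the second form.

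\textbf{Third equality.} Rather than manipulating the second form directly (whose $k$-range runs up to $n$, not $r$), I would evaluate the second form at $x=0$ to get
\begin{equation*}
A_r(0) = \sum_{k=0}^r \binom{r+1}{k+1}(-1)^k \mathds{E}S_k^r,
\end{equation*}
and then substitute into the Appell representation \eqref{eq2.9}, namely $A_n(x) = \sum_{r=0}^n \binom{n}{r} A_r(0) x^{n-r}$. This immediately delivers the third expression.

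\textbf{Anticipated obstacle.} The calculations are all routine; the only technical point is justifying the geometric-series expansion and the swap of infinite sums in the first step, which I would handle by restricting to $|z|<r$ small enough that $\mathds{E}e^{r|Y|}<\infty$ (cf.\ \eqref{eq2.16}) and $|\mathds{E}e^{zY}-1|<1$ hold simultaneously, so absolute convergence legitimizes the reordering. Once the first equality is established at the level of formal generating functions, the remaining two are purely combinatorial.
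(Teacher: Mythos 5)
Your proposal is correct and follows essentially the same route as the paper's own proof: the geometric-series expansion of $1/\mathds{E}e^{zY}$ combined with \eqref{eq2.20} for the first equality, the hockey-stick identity $\sum_{r=k}^n\binom{r}{k}=\binom{n+1}{k+1}$ for the second, and substitution of $A_r(0)$ into the Appell representation \eqref{eq2.9} for the third. No substantive differences.
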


\begin{proof}
It follows from assumption \eqref{eq2.16} and the dominated convergence theorem that
\begin{equation*}
    |\mathds{E}e^{zY}-1|<1,\quad |z|\leq s,
\end{equation*}
for some $s>0$. Whenever $|z|\leq s$, we have from \eqref{eq2.20} and \eqref{eq3.21}
\begin{equation*}
    \begin{split}
        & G(A(x),z)=\dfrac{e^{xz}}{1+(\mathds{E}e^{zY}-1)}=\sum_{r=0}^\infty (-1)^r e^{xz}(\mathds{E}e^{zY}-1)^r \\
         & =\sum_{r=0}^\infty (-1)^r r! \sum_{n=r}^\infty \dfrac{S_Y(n,r;x)}{n!}\, z^n=\sum_{n=0}^\infty \dfrac{z^n}{n!}\sum_{r=0}^n (-1)^r r! S(n,r;x),
     \end{split}
\end{equation*}
thus showing the first equality in \eqref{eq3.22}. The second one readily follows from \eqref{eq2.18} and the elementary combinatorial identity
\begin{equation*}
    \sum_{r=k}^n \binom{r}{k}=\binom{n+1}{k+1}.
\end{equation*}
Finally, using \eqref{eq2.9} and the second equality in \eqref{eq3.22}, we obtain
\begin{equation*}
    A_n(x)=\sum_{r=0}^n \binom{n}{r}x^{n-r}A_r(0)=\sum_{r=0}^n \binom{n}{r}x^{n-r}\sum_{k=0}^r \binom{r+1}{k+1}(-1)^k \mathds{E}S_k^r.
\end{equation*}
This shows the third equality in \eqref{eq3.22} and completes the proof.
\end{proof}

\begin{remark}\label{r2}
Assume that $A(x)\in \mathcal{R}$. By Theorem~\ref{th1}, we have $A_0(x)=1$. This implies, by virtue of \eqref{eq2.14}, that $T_0A(x)=I(x)$.
\end{remark}

In the following result, we show that binomial convolutions of scale transformations of Appell sequences in the subset $\mathcal{R}$ also belong to $\mathcal{R}$.

\begin{theorem}\label{th3}
Let $\boldsymbol{w}=(w_1,\ldots ,w_m)\in \mathds{R}^m$ and let $A^{(j)}(x)\in \mathcal{R}$ with associated random variable $Y^{(j)}$, $j=1,\ldots ,m$. Suppose that the random vector $\boldsymbol{Y}=(Y^{(1)},\ldots ,Y^{(m)})$ has mutually independent components. Then, the Appell sequence $(T_{w_1}A^{(1)}\times \cdots \times T_{w_m}A^{(m)})(x)$ belongs to $\mathcal{R}$ with associated random variable $\boldsymbol{w}\cdot \boldsymbol{Y}= w_1Y^{(1)}+\cdots +w_mY^{(m)}$.
\end{theorem}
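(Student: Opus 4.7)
The plan is to verify the claim by computing the generating function of $(T_{w_1}A^{(1)}\times \cdots \times T_{w_m}A^{(m)})(x)$ directly and recognizing it as the generating function of an Appell sequence in $\mathcal{R}$ whose associated random variable is $\boldsymbol{w}\cdot\boldsymbol{Y}$. Since Theorem~\ref{th1} tells us that $A_0^{(j)}(0)=1$, each $T_{w_j}A^{(j)}(x)$ is a bona fide Appell sequence in $\mathcal{A}$, so the iterated binomial convolution is well defined.

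First, I would iterate the generating function characterization \eqref{eq2.11} of the binomial convolution in $\mathcal{A}$ (which extends by induction from two to $m$ factors, with $x$ sitting in only one of them, say by splitting $x=x+0+\cdots+0$ and using \eqref{eq2.12}) to obtain
\begin{equation*}
G\bigl((T_{w_1}A^{(1)}\times \cdots \times T_{w_m}A^{(m)})(x),z\bigr)=e^{xz}\prod_{j=1}^m G(T_{w_j}A^{(j)}(0),z).
\end{equation*}
Next, I would apply \eqref{eq2.15} at $x=0$ to each factor, giving $G(T_{w_j}A^{(j)}(0),z)=G(A^{(j)}(0),w_j z)$, and then use the defining property \eqref{eq3.21} of $\mathcal{R}$ to rewrite $G(A^{(j)}(0),w_jz)=1/\mathds{E}e^{w_j z Y^{(j)}}$. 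Combining these yields
\begin{equation*}
G\bigl((T_{w_1}A^{(1)}\times \cdots \times T_{w_m}A^{(m)})(x),z\bigr)=\frac{e^{xz}}{\prod_{j=1}^m \mathds{E}e^{w_j z Y^{(j)}}}.
\end{equation*}

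The decisive step is then to use the mutual independence of the $Y^{(j)}$ to collapse the product of moment generating functions into a single one:
\begin{equation*}
\prod_{j=1}^m \mathds{E}e^{w_j z Y^{(j)}}=\mathds{E}e^{z(w_1 Y^{(1)}+\cdots +w_m Y^{(m)})}=\mathds{E}e^{z(\boldsymbol{w}\cdot\boldsymbol{Y})}.
\end{equation*}
The resulting expression is precisely the form \eqref{eq3.21} with associated random variable $\boldsymbol{w}\cdot\boldsymbol{Y}$, so by the uniqueness remark following \eqref{eq3.21} the theorem is established.

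The only nontrivial point is verifying that $\boldsymbol{w}\cdot\boldsymbol{Y}$ itself satisfies the integrability hypothesis \eqref{eq2.16}, so that the generating function manipulations above are valid on a nonempty disk; this I would dispatch by the bound $\mathds{E}e^{r|\boldsymbol{w}\cdot\boldsymbol{Y}|}\le \prod_{j=1}^m \mathds{E}e^{r|w_j||Y^{(j)}|}$, which is finite for $r>0$ small enough by independence and \eqref{eq2.16} applied to each $Y^{(j)}$. Beyond this, the argument is a routine verification at the level of generating functions, and I expect no substantive obstacle.
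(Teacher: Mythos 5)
Your proposal is correct and follows essentially the same route as the paper: compute the generating function of the convolution via \eqref{eq2.11} and \eqref{eq2.15}, rewrite each factor using \eqref{eq3.21}, and collapse the product of moment generating functions by independence. Your explicit check that $\boldsymbol{w}\cdot\boldsymbol{Y}$ satisfies \eqref{eq2.16} plays the same role as the paper's choice of radius $r=\min(r_j/|w_j|)$ (the paper also invokes Remark~\ref{r2} to dispose of the case $w_j=0$, which your estimate handles automatically).
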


\begin{proof}
By Remark~\ref{r2}, we can assume without loss of generality that $w_j\neq 0$, $j=1,\ldots ,m$. By assumption,
\begin{equation}\label{eq3.23}
    G(A^{(j)}(x),z)=\dfrac{e^{xz}}{\mathds{E}e^{zY^{(j)}}},\quad |z|\leq r_j,
\end{equation}
for some $r_j>0$, $j=1,\ldots ,m$. Denote by
\begin{equation*}
    r=\min\left ( \dfrac{r_1}{|w_1|},\ldots , \dfrac{r_m}{|w_m|} \right )>0.
\end{equation*}
For $|z|\leq r$, we see from \eqref{eq2.11}, \eqref{eq2.15}, and \eqref{eq3.23} that
\begin{equation*}
    \begin{split}
        & G((T_{w_1}A^{(1)}\times \cdots \times T_{w_m}A^{(m)})(x),z)\\
        & = G(T_{w_1}A^{(1)}(0),z)\cdots G(T_{w_m}A^{(m)}(0),z)e^{xz}\\
         & =G(A^{(1)}(0),zw_1)\cdots G(A^{(m)}(0),zw_m)e^{xz}\\
         &= \dfrac{e^{xz}}{\mathds{E} e^{zw_1 Y^{(1)}}\cdots \mathds{E} e^{zw_m Y^{(m)}}}= \dfrac{e^{xz}}{\mathds{E}e^{z\boldsymbol{w}\cdot \boldsymbol{Y}}},
     \end{split}
\end{equation*}
the last equality because the random vector $\boldsymbol{Y}$ has mutually independent components. Hence, the result follows from \eqref{eq3.21}.
\end{proof}

In the setting of Theorem~\ref{th3}, we use from now on the following notations. Denote by
\begin{equation}\label{eq3.24}
    w_1x_1+\cdots +w_mx_m=x, \quad x_1,\ldots ,x_m\in \mathds{R}.
\end{equation}
Let $(Y_l^{(j)})_{l\geq 1}$ be a sequence of independent copies of $Y^{(j)}$ and assume that the sequences $(Y_l^{(j)})_{l\geq 1}$, $j=1,\ldots ,m$, are mutually independent. Denote by
\begin{equation}\label{eq3.25}
    S_k^{(j)}=Y_1^{(j)}+\cdots + Y_k^{(j)},\quad k\in \mathds{N},\quad S_0^{(j)}=0.
\end{equation}
Observe that for any $k\in \mathds{N}_0$ the sums $S_k^{(j)}$, $j=1,\ldots, m$ are mutually independent.

With these notations, we state our first main result on higher-order convolution identities for scale transformations of Appell sequences.

\begin{theorem}\label{th4}
In the setting of Theorem~\ref{th3}, we have
\begin{equation}\label{eq3.25star}
    \begin{split}
       & \sum_{j_1+\cdots +j_m=n} \binom{n}{j_1,\ldots ,j_m} w_1^{j_1}\cdots w_m^{j_m}A_{j_1}^{(1)}(x_1)\cdots A_{j_m}^{(m)}(x_m)\\
       &=\sum_{r=0}^n (-1)^r r! S_{\boldsymbol{w}\cdot \boldsymbol{Y}}(n,r;x) \\
        & =\sum_{r=0}^n \binom{n}{r} x^{n-r}\sum_{k=0}^r \binom{r+1}{k+1}(-1)^k \mathds{E}\left ( w_1S_k^{(1)}+\cdots + w_mS_k^{(m)} \right )^r.
    \end{split}
\end{equation}
\end{theorem}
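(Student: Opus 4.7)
The plan is to recognize the left-hand side as a binomial convolution of scale transformations and then apply the two earlier theorems in sequence.

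First I would rewrite each summand on the left-hand side. Since $T_{w_k}A_{j_k}^{(k)}(w_k x_k) = w_k^{j_k} A_{j_k}^{(k)}(x_k)$ by the definition of the scale transformation in \eqref{eq2.13}, the left-hand side equals
\begin{equation*}
   \sum_{j_1+\cdots+j_m=n}\binom{n}{j_1,\ldots,j_m}\, T_{w_1}A_{j_1}^{(1)}(w_1 x_1)\cdots T_{w_m}A_{j_m}^{(m)}(w_m x_m).
\end{equation*}
By \eqref{eq2.7}, this is the $n$-th entry of the iterated binomial convolution $T_{w_1}A^{(1)}(w_1x_1)\times\cdots\times T_{w_m}A^{(m)}(w_m x_m)$, and by \eqref{eq2.12} this coincides with $(T_{w_1}A^{(1)}\times\cdots\times T_{w_m}A^{(m)})_n(x)$ with $x=w_1x_1+\cdots+w_m x_m$ as in \eqref{eq3.24}.

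Next I would apply Theorem~\ref{th3} to identify this convolution: it belongs to $\mathcal{R}$ and has associated random variable $\boldsymbol{w}\cdot\boldsymbol{Y}$. Then applying the first equality of Theorem~\ref{th1} to this Appell sequence immediately yields
\begin{equation*}
   (T_{w_1}A^{(1)}\times\cdots\times T_{w_m}A^{(m)})_n(x) = \sum_{r=0}^n (-1)^r r!\, S_{\boldsymbol{w}\cdot\boldsymbol{Y}}(n,r;x),
\end{equation*}
which is the first equality in \eqref{eq3.25star}.

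For the second equality I would invoke the third expression in \eqref{eq3.22}, again applied to the Appell sequence above with associated random variable $\boldsymbol{w}\cdot\boldsymbol{Y}$. The only point requiring a brief verification is that the partial sums of independent copies of $\boldsymbol{w}\cdot\boldsymbol{Y}$ coincide, in distribution, with $w_1 S_k^{(1)}+\cdots+w_m S_k^{(m)}$, where the $S_k^{(j)}$ are as defined in \eqref{eq3.25}. Since the families $(Y_l^{(j)})_{l\geq 1}$ are mutually independent and each consists of i.i.d.\ copies of $Y^{(j)}$, the random variables $\sum_{j=1}^m w_j Y_l^{(j)}$, $l\geq 1$, are i.i.d.\ copies of $\boldsymbol{w}\cdot\boldsymbol{Y}$, and summing over $l=1,\ldots,k$ gives exactly $w_1 S_k^{(1)}+\cdots+w_m S_k^{(m)}$. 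Substituting this into the third formula of \eqref{eq3.22} produces the second equality of \eqref{eq3.25star}.

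The main obstacle is essentially bookkeeping: correctly unwinding the scale transformation to expose the convolution structure, and cleanly matching the two ways of building the partial sums of $\boldsymbol{w}\cdot\boldsymbol{Y}$ (adding across independent copies of the full linear combination versus taking the same linear combination of the componentwise partial sums). Once that identification is made, both equalities are immediate consequences of Theorems~\ref{th1} and \ref{th3}.
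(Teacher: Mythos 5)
Your proof is correct and follows essentially the same route as the paper: rewrite the left-hand side as the $n$-th entry of the binomial convolution $(T_{w_1}A^{(1)}\times\cdots\times T_{w_m}A^{(m)})(x)$ via \eqref{eq2.7}, \eqref{eq2.12}, and \eqref{eq2.13}, then apply Theorem~\ref{th3} followed by the first and third equalities of Theorem~\ref{th1}, together with the observation that $(w_1Y_l^{(1)}+\cdots+w_mY_l^{(m)})_{l\geq 1}$ are i.i.d.\ copies of $\boldsymbol{w}\cdot\boldsymbol{Y}$. The only detail the paper adds is an appeal to Remark~\ref{r2} to assume without loss of generality that every $w_j\neq 0$, which your identity $T_{w_k}A_{j_k}^{(k)}(w_kx_k)=w_k^{j_k}A_{j_k}^{(k)}(x_k)$ implicitly requires.
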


\begin{proof}
By \eqref{eq2.12} and \eqref{eq3.24}, we have the basic identity
\begin{equation}\label{eq3.26}
    T_{w_1}A^{(1)}(w_1x_1)\times \cdots \times T_{w_m}A^{(m)}(w_mx_m)=(T_{w_1}A^{(1)}\times \cdots \times T_{w_m}A^{(m)})(x).
\end{equation}
Again by Remark~\ref{r2}, we can assume without loss of generality that $w_j\neq 0$, $j=1,\ldots ,m$ in \eqref{eq3.26}. From \eqref{eq2.7} and \eqref{eq2.13}, we see that
\begin{equation}\label{eq3.27}
    \begin{split}
       & (T_{w_1}A^{(1)}(w_1x_1)\times \cdots \times T_{w_m}A^{(m)}(w_mx_m))_n \\
        & \sum_{j_1+\cdots +j_m=n}\binom{n}{j_1,\ldots ,j_m}T_{w_1}A_{j_1}^{(1)}(w_1x_1)\cdots T_{w_m}A_{j_m}^{(m)}(w_mx_m)\\
        &= \sum_{j_1+\cdots +j_m=n}\binom{n}{j_1, \ldots ,j_m}w_1^{j_1}\cdots w_m^{j_m}A_{j_1}^{(1)}(x_1)\cdots A_{j_m}^{(m)}(x_m).
    \end{split}
\end{equation}
By Theorem~\ref{th3}, the Appell sequence on the right-hand side in \eqref{eq3.26} belongs to $\mathcal{R}$ with associated random variable $\boldsymbol{w}\cdot \boldsymbol{Y}$, and $(w_1Y_l^{(1)}+\cdots +w_mY_l^{(m)})_{l\geq 1}$ is a sequence of independent copies of $\boldsymbol{w}\cdot \boldsymbol{Y}$. We therefore have from \eqref{eq3.25} and Theorem~\ref{th1}
\begin{equation*}
    \begin{split}
        & (T_{w_1}A^{(1)}\times \cdots \times T_{w_m}A^{(m)})_n(x)= \sum_{r=0}^n (-1)^r r! S_{\boldsymbol{w}\cdot \boldsymbol{Y}}(n,r;x) \\
         & =\sum_{r=0}^n \binom{n}{r} x^{n-r} \sum_{k=0}^r \binom{r+1}{k+1}(-1)^k \mathds{E} \left (w_1S_k^{(1)}+\cdots +w_m S_k^{(m)} \right )^r.
     \end{split}
\end{equation*}
This, together with \eqref{eq3.26} and \eqref{eq3.27}, completes the proof.
\end{proof}

The second main result, which generalizes Theorem~\ref{th4}, is the following.

\begin{theorem}\label{th5}
Let $\boldsymbol{W}=(W_1,\ldots ,W_m)$ be a random vector whose components fulfil \eqref{eq2.16}. In the setting of Theorem~\ref{th4}, we have
\begin{equation*}
    \begin{split}
        & \sum_{j_1+\cdots +j_m=n}\binom{n}{j_1,\ldots ,j_m}\mathds{E} \left ( W_1^{j_1}\cdots W_m^{j_m} \right ) A_{j_1}^{(1)}(x_1)\cdots A_{j_m}^{(m)}(x_m) \\
         & =\sum_{r=0}^n \binom{n}{r} \sum_{k=0}^r \binom{r+1}{k+1}(-1)^k \sum_{i_1+\cdots +i_m=r}\binom{r}{i_1,\ldots ,i_m}\times \\
         &\times \mathds{E}\left ( W_1^{i_1}\cdots W_m^{i_m}(x_1W_1+\cdots +x_mW_m)^{n-r}\right )\mathds{E}(S_k^{(1)})^{i_1}\cdots \mathds{E}(S_k^{(m)})^{i_m}.
     \end{split}
\end{equation*}
\end{theorem}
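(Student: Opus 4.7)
The natural plan is to obtain Theorem~\ref{th5} as the expectation, with respect to a random vector $\boldsymbol{W}$ independent of the $Y_l^{(j)}$'s, of the deterministic identity supplied by Theorem~\ref{th4}. Pointwise in the sample space, each realisation of $\boldsymbol{W}=(W_1,\dots,W_m)$ is a vector of real numbers, so Theorem~\ref{th4} applied with $\boldsymbol{w}$ replaced by $\boldsymbol{W}$ yields the almost sure identity
\begin{equation*}
\sum_{j_1+\cdots +j_m=n}\!\binom{n}{j_1,\ldots,j_m} W_1^{j_1}\cdots W_m^{j_m} A_{j_1}^{(1)}(x_1)\cdots A_{j_m}^{(m)}(x_m)
=\sum_{r=0}^n \binom{n}{r}(\boldsymbol{W}\cdot\boldsymbol{x})^{n-r}\sum_{k=0}^r \binom{r+1}{k+1}(-1)^k\,R_{r,k}(\boldsymbol{W}),
\end{equation*}
where $\boldsymbol{W}\cdot\boldsymbol{x}=x_1W_1+\cdots+x_mW_m$ and $R_{r,k}(\boldsymbol{W})=\mathds{E}_{S}\bigl(W_1S_k^{(1)}+\cdots+W_mS_k^{(m)}\bigr)^r$, the inner expectation being taken only over the $S_k^{(j)}$'s (which, conditionally on $\boldsymbol{W}$, act as constants by the assumed independence).

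Next I would take expectations of both sides. On the left, because $\boldsymbol{W}$ is independent of the deterministic polynomial values, linearity of expectation and the finiteness of the outer sum directly produce the left-hand side of Theorem~\ref{th5}. On the right, the independence of $\boldsymbol{W}$ from the sequences $(Y_l^{(j)})_{l\ge 1}$, together with the mutual independence of the $S_k^{(j)}$'s across $j$, lets me factor $R_{r,k}(\boldsymbol{W})$ via the multinomial theorem as
\begin{equation*}
R_{r,k}(\boldsymbol{W})=\sum_{i_1+\cdots+i_m=r}\binom{r}{i_1,\ldots,i_m} W_1^{i_1}\cdots W_m^{i_m}\,\mathds{E}(S_k^{(1)})^{i_1}\cdots \mathds{E}(S_k^{(m)})^{i_m}.
\end{equation*}
Multiplying by $(\boldsymbol{W}\cdot\boldsymbol{x})^{n-r}$ and taking the full expectation then collapses the $W$-dependence into the single moment factor $\mathds{E}\bigl(W_1^{i_1}\cdots W_m^{i_m}(x_1W_1+\cdots+x_mW_m)^{n-r}\bigr)$, which is exactly what appears in the stated formula.

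The only non-cosmetic issue is integrability: I need all expectations in sight to be finite so Fubini and the pulling-out of constants are legitimate. This is precisely the purpose of hypothesis \eqref{eq2.16} imposed on each $W_j$ (and already assumed for each $Y^{(j)}$): it guarantees that every polynomial moment of $W_j$ and of $S_k^{(j)}$ exists, so the finitely many products of such moments appearing after expansion are all finite and the interchange of the finite sums with $\mathds{E}$ is immediate. Thus I expect no genuine obstacle; the proof should consist in (i) invoking Theorem~\ref{th4} pathwise, (ii) expanding the random power $(W_1S_k^{(1)}+\cdots+W_mS_k^{(m)})^r$ by the multinomial theorem, and (iii) factoring the expectation using the assumed independence between $\boldsymbol{W}$ and the $Y^{(j)}$-sequences. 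The most delicate bookkeeping step is keeping the two independent layers of randomness separate when reading off the coefficient, but once the factorisation above is made explicit the identity falls out.
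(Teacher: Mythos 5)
Your proposal is correct and follows essentially the same route as the paper: apply Theorem~\ref{th4}, expand $\mathds{E}\bigl(w_1S_k^{(1)}+\cdots+w_mS_k^{(m)}\bigr)^r$ by the multinomial theorem using the mutual independence of the $S_k^{(j)}$'s, then replace $\boldsymbol{w}$ by $\boldsymbol{W}$ and take expectations, with integrability secured by \eqref{eq2.16} and H\"older. The only (harmless) difference is ordering: the paper first rewrites the deterministic identity so that its right-hand side is an explicit polynomial in $(w_1,\ldots,w_m)$ with the $S$-moments as constant coefficients and only then randomizes, which avoids your appeal to independence between $\boldsymbol{W}$ and the $Y^{(j)}$-sequences and the attendant conditional-expectation bookkeeping.
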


\begin{proof}
By assumption \eqref{eq2.16},
\begin{equation*}
    \mathds{E}e^{r_j|W_j|}<\infty, \quad j=1,\ldots ,m,
\end{equation*}
for some $r_j>0$. This implies that each $W_j$ has finite moments of any order, $j=1,\ldots ,m$. Thus, by H\"older's inequality, all the expectations involving the random vector $\boldsymbol{W}$ in Theorem~\ref{th5} are finite.

From \eqref{eq3.24} and Theorem~\ref{th4}, we have
\begin{equation*}
    \begin{split}
        & \sum_{j_1+\cdots +j_m=n}\binom{n}{j_1,\ldots ,j_m}w_1^{j_1}\cdots w_m^{j_m} A_{j_1}^{(1)}(x_1)\cdots A_{j_m}^{(m)}(x_m) \\
         & =\sum_{r=0}^n \binom{n}{r} \sum_{k=0}^r \binom{r+1}{k+1}(-1)^k\sum_{i_1+\cdots +i_m=r}\binom{r}{i_1,\ldots ,i_m}\times \\
         & \times w_1^{i_1}\cdots w_m^{i_m}(x_1w_1+\cdots +x_mw_m)^{n-r} \mathds{E}(S_k^{(1)})^{i_1}\cdots \mathds{E}(S_k^{(m)})^{i_m}.
     \end{split}
\end{equation*}
Thus, the conclusion follows by replacing $(w_1,\ldots ,w_m)$ by the random vector $\boldsymbol{W}=(W_1,\ldots ,W_m)$ and then taking expectations.
\end{proof}

We emphasize that the random vector $\boldsymbol{W}$ in Theorem~\ref{th5} has not necessarily independent components. In fact, we will consider in Corollary~\ref{c4.3} in Section~\ref{s4} a random vector $\boldsymbol{W}$ such that $W_1+\cdots +W_m=1$.

\section{Examples}\label{s4}

As said in the Introduction, Theorems~\ref{th4} and \ref{th5} can be applied when $A^{(j)}(x)$ are the Bernoulli, Apostol-Euler, and Cauchy polynomials, among many others. Here, we will restrict our attention to the case in which every $A^{(j)}(x)$ are the classical Bernoulli polynomials. As a counterpart, we will consider different choices of the random vector $\boldsymbol{W}$.

In this section, $Y$ is a random variable having the uniform distribution on $[0,1]$, $(Y_j)_{j\geq 1}$ is a sequence of independent copies of $Y$ and
\begin{equation}\label{eq4.29}
    S_k=Y_1+\cdots +Y_k,\quad k\in \mathds{N},\qquad S_0=0.
\end{equation}
Recall that the Bernoulli polynomials $B(x)=(B_n(x))_{n\geq 0}$ are defined via their generating function as
\begin{equation}\label{eq4.30}
    G(B(x),z)=\dfrac{ze^{xz}}{e^z-1}=\dfrac{e^{xz}}{\mathds{E}e^{zY}},
\end{equation}
where the last equality in \eqref{eq4.30} was already noticed by Ta \cite{Ta2015}. On the other hand, Sun \cite{Sun2007} (see also \cite[formula~(38)]{AdeLek2017}) showed the following probabilistic representation for the classical Stirling numbers of the second kind $S(n,k)$
\begin{equation}\label{eq4.31}
    S(n,k)=\binom{n}{k}\mathds{E}S_k^{n-k},\quad k=0,1,\ldots ,n.
\end{equation}
These numbers play a crucial role in any convolution identity referring to the Bernoulli polynomials, as shown in the following result. In this respect, for any random vector $\boldsymbol{W}=(W_1,\ldots ,W_m)$ whose components fulfil \eqref{eq2.16}, we denote by
\begin{equation}\label{eq4.31.1}
    C(j_1,\ldots ,j_m)=\mathds{E}\left ( W_1^{j_1}\cdots W_m^{j_m}\right ),\quad j_1,\ldots ,j_m\in \mathds{N}_0,
\end{equation}
as well as
\begin{equation}\label{eq4.31.2}
    D(i_1,\ldots ,i_m;\boldsymbol{x})=\mathds{E} \left (W_1^{i_1}\ldots W_m^{i_m}\left ( x_1W_1+\cdots +x_mW_m\right )^{n-r}\right ),
\end{equation}
where $i_\nu\in \mathds{N}_0$ and $x_\nu\in \mathds{R}$, $\nu=1,\ldots ,m$. With the preceding notations, we state the following.

\begin{corollary}\label{c4.1}
We have
\begin{equation*}
    \begin{split}
        & \sum_{j_1+\cdots +j_m=n}\binom{n}{j_1,\ldots ,j_m}C(j_1,\ldots ,j_m)\prod_{\nu=1}^m B_{j_\nu}(x_\nu) \\
         & =\sum_{r=0}^n \binom{n}{r}\sum_{k=0}^r \binom{r+1}{k+1}(-1)^k \sum_{i_1+\cdots +i_m=r}\binom{r}{i_1,\ldots ,i_m}D(i_1,\ldots ,i_m;\boldsymbol{x})\times\\
        &\times \prod_{\nu=1}^m\dfrac{S(k+i_\nu,k)}{\binom{k+i_\nu}{k}}.
     \end{split}
\end{equation*}
\end{corollary}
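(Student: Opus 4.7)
The plan is to specialize Theorem~\ref{th5} to the case where every $A^{(j)}(x)$ equals the Bernoulli sequence $B(x)$, and then use the probabilistic representation \eqref{eq4.31} of the classical Stirling numbers of the second kind to rewrite the moments of the partial sums $S_k^{(j)}$.

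First, I would recall from \eqref{eq4.30} that $B(x)\in \mathcal{R}$, with associated random variable the uniform $[0,1]$ variable $Y$. Hence, taking $A^{(j)}(x)=B(x)$ for every $j=1,\ldots ,m$ and letting $Y^{(j)}$ denote independent copies of $Y$, the mutual-independence hypothesis on the vector $\boldsymbol{Y}$ required by Theorems~\ref{th3}--\ref{th5} is automatically satisfied, and the sums $S_k^{(j)}$ defined in \eqref{eq3.25} are each distributed as the partial sum $S_k$ of \eqref{eq4.29}. In particular, $\mathds{E}(S_k^{(j)})^{i_\nu}=\mathds{E}S_k^{i_\nu}$ for every $\nu$.

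Second, with $i_\nu$ playing the role of $n-k$ in \eqref{eq4.31}, I would deduce
\begin{equation*}
\mathds{E}S_k^{i_\nu}=\dfrac{S(k+i_\nu,k)}{\binom{k+i_\nu}{k}}.
\end{equation*}
Substituting this expression into each of the $m$ factors $\mathds{E}(S_k^{(j)})^{i_\nu}$ that appear on the right-hand side of Theorem~\ref{th5}, and using the shorthands \eqref{eq4.31.1} for $C(j_1,\ldots ,j_m)$ and \eqref{eq4.31.2} for $D(i_1,\ldots ,i_m;\boldsymbol{x})$, produces exactly the identity claimed by the corollary.

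I do not anticipate any real obstacle: the argument is a direct specialization followed by a single algebraic substitution. The only minor point worth verifying is that the expectations $C(j_1,\ldots ,j_m)$ and $D(i_1,\ldots ,i_m;\boldsymbol{x})$ are indeed finite, which follows from the integrability condition \eqref{eq2.16} imposed on each component of $\boldsymbol{W}$, exactly as already invoked inside the proof of Theorem~\ref{th5}.
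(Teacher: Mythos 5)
Your proposal is correct and follows exactly the paper's own argument: specialize Theorem~\ref{th5} to $A^{(j)}(x)=B(x)$ with the uniform $[0,1]$ associated random variable, identify each $S_k^{(j)}$ in law with $S_k$ from \eqref{eq4.29}, and invoke Sun's representation \eqref{eq4.31} to write $\mathds{E}S_k^{i_\nu}=S(k+i_\nu,k)/\binom{k+i_\nu}{k}$. No gaps; the finiteness remark is a harmless extra check already covered in the proof of Theorem~\ref{th5}.
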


\begin{proof}
In view of \eqref{eq4.31.1} and \eqref{eq4.31.2}, it suffices to apply Theorem~\ref{th5} to the case $A^{(j)}(x)=B(x)$, $j=1,\ldots ,m$. Note that, in such a case, the random sums $S_k^{(j)}$, $j=1,\ldots ,m$ defined in \eqref{eq3.25} have the same law as that of the random sum $S_k$ defined in \eqref{eq4.29}, thus having from \eqref{eq4.31}
\begin{equation*}
    \mathds{E}(S_k^{(j)})^i=\dfrac{S(k+i,k)}{\binom{k+i}{k}},\quad j=1,\ldots ,m,
\end{equation*}
for any $k,i\in \mathds{N}_0$. The proof is complete.
\end{proof}

In contrast with other results found in the literature, we point out that the right-hand side in Corollary~\ref{c4.1} only depends on the random vector $\boldsymbol{W}$ and on the Stirling numbers $S(n,k)$, but not on the Bernoulli polynomials themselves. Different particular cases of Corollary~\ref{c4.1} are obtained for each choice of $\boldsymbol{W}$. The first one is the following.

\begin{corollary}\label{c4.2}
Let $w_j,x_j\in \mathds{R}$, $j=1,\ldots ,m$, as in \eqref{eq3.24}. Then, Corollary~\ref{c4.1} holds for
\begin{equation*}
    C(j_1,\ldots ,j_m)=w_1^{j_1}\cdots w_m^{j_m}, \quad D(i_1,\ldots ,i_m;\boldsymbol{x})=w_1^{i_1}\cdots w_m^{i_m}x^{n-r}.
\end{equation*}
\end{corollary}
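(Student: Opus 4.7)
The plan is to specialize Corollary~\ref{c4.1} to the degenerate case in which the random vector $\boldsymbol{W}$ is a deterministic constant. Concretely, I would take $\boldsymbol{W}=(W_1,\ldots,W_m)$ to be the random vector with $\mathds{P}(W_j=w_j)=1$ for $j=1,\ldots,m$, where the $w_j$ are as in \eqref{eq3.24}. Since each $W_j$ is bounded, the integrability hypothesis \eqref{eq2.16} is immediate: $\mathds{E}e^{r|W_j|}=e^{r|w_j|}<\infty$ for every $r>0$, so Corollary~\ref{c4.1} applies to this choice of $\boldsymbol{W}$.

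With this choice, the coefficient $C(j_1,\ldots,j_m)$ in \eqref{eq4.31.1} reduces by direct evaluation to
\begin{equation*}
    C(j_1,\ldots,j_m)=\mathds{E}\left(W_1^{j_1}\cdots W_m^{j_m}\right)=w_1^{j_1}\cdots w_m^{j_m},
\end{equation*}
which matches the first claim of the corollary. Similarly, the coefficient $D(i_1,\ldots,i_m;\boldsymbol{x})$ in \eqref{eq4.31.2} becomes
\begin{equation*}
    D(i_1,\ldots,i_m;\boldsymbol{x})=w_1^{i_1}\cdots w_m^{i_m}\left(x_1w_1+\cdots+x_mw_m\right)^{n-r}=w_1^{i_1}\cdots w_m^{i_m}\,x^{n-r},
\end{equation*}
where the last equality is nothing but the defining relation \eqref{eq3.24}. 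Substituting these two expressions for $C$ and $D$ back into Corollary~\ref{c4.1} yields the claimed identity.

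There is no substantive obstacle here; the result is a straightforward instantiation of Corollary~\ref{c4.1} with a constant random vector, which illustrates that Theorem~\ref{th5} (and hence Corollary~\ref{c4.1}) already contains the purely deterministic convolution identity of Theorem~\ref{th4} as a special case. The only thing worth underlining in the writeup is the appeal to \eqref{eq3.24} to collapse $x_1w_1+\cdots+x_mw_m$ into the single variable $x$ that appears on the right-hand side of the stated formulas.
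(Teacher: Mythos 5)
Your proposal is correct and follows exactly the paper's own argument: specialize Corollary~\ref{c4.1} to the deterministic vector $\boldsymbol{W}=(w_1,\ldots,w_m)$, evaluate \eqref{eq4.31.1} and \eqref{eq4.31.2} directly, and use \eqref{eq3.24} to identify $x_1w_1+\cdots+x_mw_m$ with $x$. The paper states this in one line; your added checks (the trivial verification of \eqref{eq2.16} for a constant vector) are fine but not essential.
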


\begin{proof}
Recalling \eqref{eq4.31.1} and \eqref{eq4.31.2}, it is enough to choose in Corollary~\ref{c4.1} the deterministic vector $\boldsymbol{W}=(w_1,\ldots ,w_m)$.
\end{proof}

In his classical result, Dilcher \cite{Dil1996} considered the case $w_1=\cdots =w_m=1$ and obtained an identity involving the products $s(m,k)B_j(x)$, where $s(m.k)$ are the Stirling numbers of the first kind. Wang \cite{Wan2013} (see also Chu and Zhou \cite{ChuZho2010}) provided identities when at most two of the numbers $w_j$, $j=1,\ldots ,m$ are different from 1 and the product of Bernoulli polynomials is replaced by a product involving both the Bernoulli and Euler polynomials. In Wang's paper, the resulting formula also depends on such polynomials.

Denote by $\langle x \rangle_n$ the rising factorial, i.e.,
\begin{equation*}
    \langle x \rangle_n=\dfrac{\Gamma(x+n)}{\Gamma(x)},
\end{equation*}
$\Gamma (\cdot)$ being Euler's gamma function. For any $\alpha>0$, denote by $X_\alpha$ a random variable having the gamma density
\begin{equation}\label{eq4.33}
    \rho_\alpha (\theta)=\dfrac{\theta^{\alpha -1}e^{-\theta}}{\Gamma (\alpha)},\quad \theta >0.
\end{equation}
Let $m=2,3,\ldots $ and $\alpha_j>0$, $j=1,\ldots ,m$. Suppose that $(X_{\alpha_j},\ j=1,\ldots ,m)$ are mutually independent random variables such that each $X_{\alpha_j}$ has the gamma density $\rho_{\alpha_j}(\theta)$. We consider the random vector $\boldsymbol{W}=(W_1,\ldots ,W_m)$ defined as
\begin{equation}\label{eq4.34}
    W_j=\dfrac{X_{\alpha_j}}{X_{\alpha_1}+\cdots +X_{\alpha_m}},\quad j=1,\ldots ,m.
\end{equation}
Observe that $W_1+\cdots +W_m=1$. It is well known (cf. Kotz \textit{et al.} \cite{KotBalJoh2000} or Dilcher and Vignat \cite{DilVig2016}) that $\boldsymbol{W}$ has the multivariate Dirichlet distribution and
\begin{equation}\label{eq4.35}
    \mathds{E}\left (W_1^{j_1}\cdots W_m^{j_m}\right )=\dfrac{\langle \alpha_1 \rangle_{j_1}\cdots \langle \alpha_m \rangle_{j_m}}{\langle \alpha_1+\cdots +\alpha_m\rangle_{j_1+\cdots +j_m}},\quad j_l,\ldots ,j_m\in \mathds{N}_0.
\end{equation}
With these ingredients, we enunciate the following result.

\begin{corollary}\label{c4.3}
Let m=2,3,\ldots  and $\alpha_j>0$, $j=1,\ldots ,m$. Then, Corollary~\ref{c4.1} holds for $x_j=x$, $j=1,\ldots ,m$, and
\begin{equation*}
    C(j_1,\ldots ,j_m)=\dfrac{\langle \alpha_1 \rangle_{j_1}\cdots \langle \alpha_m \rangle_{j_m}}{\langle \alpha_1+\cdots +\alpha_m \rangle_n},\ D(i_1,\ldots ,i_m;\boldsymbol{x})=\dfrac{\langle \alpha_1 \rangle_{i_1}\cdots \langle \alpha_m \rangle_{i_m}}{\langle \alpha_1+\cdots +\alpha_m \rangle_r}\, x^{n-r}.
\end{equation*}
\end{corollary}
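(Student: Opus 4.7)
The plan is to specialize Corollary~\ref{c4.1} to the Dirichlet random vector $\boldsymbol{W}=(W_1,\ldots,W_m)$ defined in \eqref{eq4.34}, for which the joint moments are supplied by the classical formula \eqref{eq4.35}. Since $0\leq W_j\leq 1$, the integrability hypothesis \eqref{eq2.16} is trivially satisfied ($\mathds{E}e^{r|W_j|}\leq e^r<\infty$ for every $r>0$), so Corollary~\ref{c4.1} applies and the entire task reduces to evaluating the two quantities $C(j_1,\ldots,j_m)$ and $D(i_1,\ldots,i_m;\boldsymbol{x})$ defined in \eqref{eq4.31.1}--\eqref{eq4.31.2} for this particular $\boldsymbol{W}$.

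The evaluation of $C$ is immediate from \eqref{eq4.31.1} and \eqref{eq4.35}, together with $j_1+\cdots+j_m=n$, giving
$$C(j_1,\ldots,j_m)=\dfrac{\langle\alpha_1\rangle_{j_1}\cdots\langle\alpha_m\rangle_{j_m}}{\langle\alpha_1+\cdots+\alpha_m\rangle_n}.$$
For $D$, the key observation---and the reason the hypothesis $x_j=x$ is imposed---is that $W_1+\cdots+W_m=1$ almost surely by construction. Setting $x_j=x$ therefore makes the inner factor deterministic, $(x_1W_1+\cdots+x_mW_m)^{n-r}=x^{n-r}(W_1+\cdots+W_m)^{n-r}=x^{n-r}$, so $D$ factors as $x^{n-r}\,\mathds{E}(W_1^{i_1}\cdots W_m^{i_m})$, and a second application of \eqref{eq4.35}, now with $i_1+\cdots+i_m=r$, yields the claimed expression. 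Substituting these formulas for $C$ and $D$ into Corollary~\ref{c4.1} concludes the argument.

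No essential obstacle arises: the result is a bookkeeping specialization of Corollary~\ref{c4.1} once one recognizes that, among all random vectors $\boldsymbol{W}$ one might plug in, the Dirichlet vector is convenient precisely because its joint moments admit the clean closed form \eqref{eq4.35} and because the almost-sure identity $W_1+\cdots+W_m=1$ kills the dependence of $D$ on $\boldsymbol{W}$ whenever all the $x_j$ coincide. The only subtle point is the hypothesis $x_j=x$ in the statement, which is exactly what enables this simplification.
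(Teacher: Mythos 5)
Your proposal is correct and follows essentially the same route as the paper: specialize Corollary~\ref{c4.1} to the Dirichlet vector of \eqref{eq4.34}, read off $C$ from \eqref{eq4.31.1} and \eqref{eq4.35} with $j_1+\cdots+j_m=n$, and use $W_1+\cdots+W_m=1$ together with $x_j=x$ to reduce $D$ to $x^{n-r}\,\mathds{E}(W_1^{i_1}\cdots W_m^{i_m})$, evaluated again by \eqref{eq4.35} with $i_1+\cdots+i_m=r$. Your extra remark that boundedness of the $W_j$ guarantees \eqref{eq2.16} is a small but welcome addition the paper leaves implicit.
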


\begin{proof}
Choose $x_j=x$ and $W_j$ as in \eqref{eq4.34} in Corollary~\ref{c4.1}. From \eqref{eq4.31.1} and \eqref{eq4.35}, we have
\begin{equation*}
    C(j_1,\ldots ,j_m)=\dfrac{\langle \alpha_1 \rangle_{j_1}\cdots \langle \alpha_m \rangle_{j_m}}{\langle \alpha_1+\cdots +\alpha_m \rangle_n},
\end{equation*}
because $j_1+\cdots +j_m=n$. Since $W_1+\cdots +W_m=1$, we have from \eqref{eq4.31.2} and \eqref{eq4.35}
\begin{equation*}
    D(i_1,\ldots ,i_m;\boldsymbol{x})=x^{n-r}\mathds{E}\left (W_1^{i_1}\cdots W_m^{i_m}\right )=\dfrac{\langle \alpha_1 \rangle_{i_1}\cdots \langle \alpha_m \rangle_{i_m}}{\langle \alpha_1+\cdots +\alpha_m \rangle_r}\, x^{n-r},
\end{equation*}
because $i_1+\cdots+i_m=r$. The proof is complete.
\end{proof}

Dilcher and Vignat \cite{DilVig2016} have recently given a similar result to Corollary~\ref{c4.3} in terms of products of the form $B_{l_0}(x)B_{l_1}(0)\cdots B_{l_k}(0)$. This result generalizes Miki's identity (see Miki \cite{Mik1978} and Gessel \cite{Ges2005}).

To show the following result, we will need the following reformulation of the well known Chu-Vandermonde identity (see, for instance, Chang and Xu \cite{ChaXu2011} or Vignat and Moll \cite{VigMol2015} for a probabilistic proof of this identity).

\begin{lemma}\label{l8}
Let $t_1,\ldots ,t_m\in \mathds{R}$ with $t_1+\cdots +t_m=t$. Then,
\begin{equation}\label{eq4.37}
    \sum_{l_1+\cdots +l_m=n}\binom{t_1+l_1}{l_1}\cdots \binom{t_m+l_m}{l_m}=\binom{t+m+n-1}{n}.
\end{equation}
\end{lemma}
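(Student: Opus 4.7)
The plan is to deduce the identity from the generalized binomial series. For any $s \in \mathds{R}$ and $|z|<1$, one has
\begin{equation*}
(1-z)^{-(s+1)} = \sum_{l=0}^{\infty} \binom{s+l}{l}\,z^l,
\end{equation*}
which furnishes a generating function for the factors $\binom{t_j+l_j}{l_j}$ appearing on the left-hand side of \eqref{eq4.37}.

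First I would apply this expansion to each of the $m$ factors in the product $\prod_{j=1}^m (1-z)^{-(t_j+1)}$ and multiply the resulting series, obtaining
\begin{equation*}
\prod_{j=1}^m (1-z)^{-(t_j+1)} = \sum_{n=0}^{\infty} z^n \sum_{l_1+\cdots+l_m=n}\binom{t_1+l_1}{l_1}\cdots\binom{t_m+l_m}{l_m}.
\end{equation*}
Next, invoking the hypothesis $t_1+\cdots+t_m=t$, I would collapse the same product into the single power series
\begin{equation*}
(1-z)^{-(t+m)} = \sum_{n=0}^{\infty} \binom{t+m+n-1}{n}\,z^n,
\end{equation*}
which is precisely the generating function of the right-hand side of \eqref{eq4.37}. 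Comparing coefficients of $z^n$ in the two expressions finishes the argument.

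There is no real obstacle here beyond verifying convergence, which is automatic since every series above has radius of convergence at least $1$ and the rearrangement of absolutely convergent series is legitimate. An alternative route would be induction on $m$, using the classical two-variable Chu-Vandermonde identity as both base case and inductive step, but the generating function argument is cleaner and avoids combinatorial bookkeeping.
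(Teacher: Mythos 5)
Your proof is correct. It takes a mildly different route from the paper: the paper's proof is a two-line reduction, first rewriting each factor via the upper-negation formula $\binom{-\beta}{n}=(-1)^n\binom{\beta+n-1}{n}$ and then invoking the (multivariate) Chu--Vandermonde identity as a known fact, whereas you prove the identity from scratch by multiplying the binomial series $(1-z)^{-(s+1)}=\sum_{l\ge 0}\binom{s+l}{l}z^l$ and comparing coefficients of $z^n$ in $(1-z)^{-(t+m)}$. In substance these are the same mechanism --- the generating-function computation is exactly the standard proof of the real-parameter Vandermonde identity that the paper cites --- so your argument can be viewed as inlining the proof of the lemma the paper merely references. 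What your version buys is self-containedness and a transparent justification valid for arbitrary real $t_j$ (the absolute convergence for $|z|<1$ handles the non-integer case cleanly, which a purely combinatorial Vandermonde argument would not); what the paper's version buys is brevity. Both are complete and correct.
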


\begin{proof}
Use the formula
\begin{equation*}
    \binom{-\beta}{n}=(-1)^n \binom{\beta+n-1}{n},\quad \beta\in \mathds{R}
\end{equation*}
and apply the classical Chu-Vandermonde identity.
\end{proof}

\begin{corollary}\label{c4.5}
Corollary~\ref{c4.1} holds for $x_j=x$, $j=1,\ldots ,m$ and
\begin{equation*}
    C(j_1,\ldots ,j_m)=\prod_{\nu=1}^m j_{\nu}!,\quad D(i_1,\ldots ,i_m;\boldsymbol{x})=x^{n-r}\dfrac{(m+n-1)!}{(m+r-1)!}\,\prod_{\nu=1}^m i_{\nu}!.
\end{equation*}
\end{corollary}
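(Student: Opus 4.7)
The plan is to apply Corollary~\ref{c4.1} with a carefully chosen random vector $\boldsymbol{W}$, and then use the multinomial theorem together with Lemma~\ref{l8} to simplify the resulting expectation $D$. The target value $C(j_1,\ldots,j_m) = \prod_\nu j_\nu!$ suggests that the components of $\boldsymbol{W}$ should be \emph{independent} random variables with $j$th moment equal to $j!$, which is exactly the case of standard exponential distributions, i.e., $W_\nu = X_{\alpha_\nu}$ with $\alpha_\nu = 1$ in the gamma density \eqref{eq4.33} (since $\langle 1 \rangle_j = j!$). These variables are nonnegative and satisfy $\mathds{E}e^{r|W_\nu|} = 1/(1-r) < \infty$ for $0<r<1$, so hypothesis \eqref{eq2.16} is fulfilled.

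By independence and the formula $\mathds{E}W_\nu^j = j!$, one immediately gets $C(j_1,\ldots,j_m) = \mathds{E}(W_1^{j_1}\cdots W_m^{j_m}) = \prod_\nu j_\nu!$. With the choice $x_\nu = x$ one has $x_1W_1+\cdots+x_mW_m = x(W_1+\cdots+W_m)$, and expanding the power by the multinomial theorem together with the independence of the $W_\nu$ yields
\begin{equation*}
D(i_1,\ldots,i_m;\boldsymbol{x}) = x^{n-r}\sum_{l_1+\cdots+l_m=n-r}\binom{n-r}{l_1,\ldots,l_m}\prod_{\nu=1}^m (i_\nu+l_\nu)!.
\end{equation*}

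Using the elementary identity $(i_\nu+l_\nu)!/l_\nu! = i_\nu!\binom{i_\nu+l_\nu}{l_\nu}$, one factors out $(n-r)!\prod_\nu i_\nu!$ and is left with the sum $\sum_{l_1+\cdots+l_m=n-r}\prod_\nu \binom{i_\nu+l_\nu}{l_\nu}$. Since $i_1+\cdots+i_m = r$, Lemma~\ref{l8} (applied with $t_\nu = i_\nu$, $t=r$, and $n$ replaced by $n-r$) evaluates this sum to $\binom{m+n-1}{n-r}$. Multiplying by $(n-r)!$ gives $(m+n-1)!/(m+r-1)!$, producing the stated expression for $D$, and the corollary then follows from Corollary~\ref{c4.1}.

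I do not expect any serious obstacle: the only nontrivial step is recognizing that the required moment identity $\mathds{E}(W_1^{i_1}\cdots W_m^{i_m}) = \prod_\nu i_\nu!$ pins down a product of standard exponentials, and that Lemma~\ref{l8} is exactly the combinatorial tool needed to collapse the multinomial sum that arises after expanding $(W_1+\cdots+W_m)^{n-r}$. The remainder is routine bookkeeping with factorials and binomial coefficients.
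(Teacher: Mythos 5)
Your proposal is correct and follows essentially the same route as the paper: both take $\boldsymbol{W}$ with i.i.d.\ standard exponential components, compute $C$ from $\mathds{E}W_\nu^j=j!$, expand $D$ by the multinomial theorem, factor out $(n-r)!\prod_\nu i_\nu!$ via $(i_\nu+l_\nu)!=i_\nu!\,l_\nu!\binom{i_\nu+l_\nu}{l_\nu}$, and collapse the remaining sum with Lemma~\ref{l8} to $\binom{m+n-1}{n-r}$. The only (welcome) addition on your part is the explicit verification of the integrability hypothesis \eqref{eq2.16}, which the paper leaves implicit.
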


\begin{proof}
Choose in Corollary~\ref{c4.1} a random vector $\boldsymbol{W}=(W_1,\ldots ,W_m)$ whose components are independent and identically distributed random variables, each one having the exponential density $\rho_1(\theta)$ defined in \eqref{eq4.33}. By \eqref{eq4.31.1}, \eqref{eq4.33}, and the independence assumption, we see that
\begin{equation}\label{eq4.38}
    C(j_1,\ldots ,j_m)=\mathds{E}W_1^{j_1}\cdots \mathds{E}W_m^{j_m}=\prod_{\nu=1}^{m}j_{\nu}!.
\end{equation}
Since $x_j=x$, $j=1,\ldots ,m$, we have from \eqref{eq4.31.2}, \eqref{eq4.38}, and Lemma~\ref{l8}
\begin{equation}\label{eq4.39}
    \begin{split}
       & D(i_1,\ldots ,i_m;\boldsymbol{x})=x^{n-r}\sum_{l_1+\cdots +l_m=n-r}\binom{n-r}{l_1,\ldots ,l_m}\mathds{E}W_1^{i_1+l_1}\cdots \mathds{E}W_m^{i_m+l_m} \\
        & =x^{n-r}(n-r)!\prod_{\nu=1}^m i_\nu! \sum_{l_1+\cdots +l_m=n-r}\binom{i_1+l_1}{l_1}\cdots \binom{i_m+l_m}{l_m}\\
        &=x^{n-r}(n-r)!\prod_{\nu=1}^m i_\nu! \binom{m+n-1}{n-r},
    \end{split}
\end{equation}
because $i_1+\cdots +i_m=r$. The conclusion follows from \eqref{eq4.38} and \eqref{eq4.39}.
\end{proof}

Note that Corollary~\ref{c4.5} gives us, after some simple computations, the identity
\begin{equation}\label{eq4.40}
    \begin{split}
       & \sum_{j_1+\cdots +j_m=n}B_{j_1}(x)\cdots B_{j_m}(x) \\
        & =\sum_{r=0}^n \binom{m+n-1}{n-r}x^{n-r}\sum_{k=0}^r \binom{r+1}{k+1}(-1)^k \sum_{i_i+\cdots +i_m=r}\prod_{\nu=1}^m \dfrac{S(k+i_\nu,k)}{\binom{k+i_\nu}{k}}.
    \end{split}
\end{equation}

Agoh and Dilcher \cite{AgoDil2014} considered the left-hand side in \eqref{eq4.40} and showed an identity in terms of products of the form $B_{l_0}(x)B_{l_1}(0)\cdots B_{l_k}(0)$. This result is a generalization of an identity proposed by Matiyasevich (see Agoh \cite{Ago2014} for a proof and further references on Matiyasevich identity).

Let $H(x)=(H_n(x))_{n\geq 0}$ be the Hermite polynomials. Such polynomials can be represented in probabilistic terms (cf. Withers \cite{Wit2000}, Adell and Lekuona \cite{AdeLek2006} or Ta \cite{Ta2015}) as
\begin{equation}\label{eq4.41}
    H_n(x)=\mathds{E}(x+\boldsymbol{i}Z)^n,
\end{equation}
where $\boldsymbol{i}$ is the imaginary unit and $Z$ is a random variable having the standard normal density. Finally, representation \eqref{eq4.41} allows us to give the following convolution identities.

\begin{corollary}\label{c4.6}
Corollary~\ref{c4.1} holds for $x_j=x$, $j=1,\ldots ,m$, and
\begin{equation*}
\begin{split}
    &C(j_1,\ldots ,j_m)=(-\boldsymbol{i})^n \prod_{\nu=1}^{m} H_{j_\nu}(0),\\
    &D(i_1,\ldots ,i_m;\boldsymbol{x})=(-\boldsymbol{i})^n x^{n-r}\sum_{l_1+\cdots +l_m=n-r} \binom{n-r}{l_1,\ldots ,l_m} \prod_{\nu=1}^m H_{i_\nu+l_\nu}(0).
\end{split}
\end{equation*}
\end{corollary}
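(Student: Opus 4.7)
The plan is to specialize Corollary~\ref{c4.1} by choosing $\boldsymbol{W}=(Z_1,\ldots,Z_m)$, where $Z_1,\ldots,Z_m$ are mutually independent standard normal random variables, and setting $x_j=x$ for $j=1,\ldots,m$. The integrability condition \eqref{eq2.16} is trivially satisfied because $\mathds{E}e^{r|Z|}<\infty$ for every $r>0$, so all expectations appearing in \eqref{eq4.31.1}--\eqref{eq4.31.2} are finite.

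The key algebraic ingredient is the representation \eqref{eq4.41} at $x=0$, namely $H_j(0)=\boldsymbol{i}^j\mathds{E}Z^j$, which I rewrite as $\mathds{E}Z^j=(-\boldsymbol{i})^j H_j(0)$. By independence of the $Z_\nu$ and the constraint $j_1+\cdots+j_m=n$, definition \eqref{eq4.31.1} then gives
\begin{equation*}
C(j_1,\ldots,j_m)=\prod_{\nu=1}^m \mathds{E}Z^{j_\nu}=\prod_{\nu=1}^m (-\boldsymbol{i})^{j_\nu}H_{j_\nu}(0)=(-\boldsymbol{i})^n\prod_{\nu=1}^m H_{j_\nu}(0),
\end{equation*}
which is exactly the first claimed formula.

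Next, since $x_1W_1+\cdots+x_mW_m=x(Z_1+\cdots+Z_m)$, I would expand $\bigl(x(Z_1+\cdots+Z_m)\bigr)^{n-r}$ via the multinomial theorem, exchange summation and expectation, and invoke the independence of the $Z_\nu$ in \eqref{eq4.31.2} to obtain
\begin{equation*}
D(i_1,\ldots,i_m;\boldsymbol{x})=x^{n-r}\sum_{l_1+\cdots+l_m=n-r}\binom{n-r}{l_1,\ldots,l_m}\prod_{\nu=1}^m \mathds{E}Z^{i_\nu+l_\nu}.
\end{equation*}
Because $(i_1+l_1)+\cdots+(i_m+l_m)=r+(n-r)=n$, applying the same Hermite identity factors out $(-\boldsymbol{i})^n$ and yields the second claimed formula.

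The computation is entirely parallel to the proofs of Corollaries~\ref{c4.2}, \ref{c4.3} and \ref{c4.5}, the only new ingredient being the probabilistic representation \eqref{eq4.41} of the Hermite polynomials. I do not anticipate any real obstacle; the mildly subtle point is that the complex factor $(-\boldsymbol{i})^n$ in the final identity is purely an algebraic rewriting, combining with the values $H_j(0)=\boldsymbol{i}^j\mathds{E}Z^j$ to preserve the real-valuedness of $C$ and $D$ (in particular, all contributions from odd exponents vanish since $\mathds{E}Z^{2k+1}=0$).
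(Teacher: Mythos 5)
Your proposal is correct and follows essentially the same route as the paper: choose $\boldsymbol{W}$ with i.i.d.\ standard normal components, use the representation \eqref{eq4.41} at $x=0$ to write $\mathds{E}Z^{j}=(-\boldsymbol{i})^{j}H_{j}(0)$, and collect the factor $(-\boldsymbol{i})^{n}$ from the constraints $j_1+\cdots+j_m=n$ and $(i_1+l_1)+\cdots+(i_m+l_m)=n$. The multinomial expansion of $(x(Z_1+\cdots+Z_m))^{n-r}$ in your computation of $D$ matches the paper's treatment exactly.
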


\begin{proof}
Let $\boldsymbol{W}=(W_1,\ldots ,W_m)$ be a random vector with independent and identically distributed components, each one having the standard normal density. By \eqref{eq4.31.1} and \eqref{eq4.41}, we have
\begin{equation*}
    C(j_1,\ldots ,j_m)=\mathds{E}W_1^{j_1}\cdots \mathds{E}W_m^{j_m}=(-\boldsymbol{i})^n H_{j_1}(0)\cdots H_{j_m}(0),
\end{equation*}
since $j_1+\cdots +j_m=n$. Similarly, from \eqref{eq4.31.2} and \eqref{eq4.41} we have
\begin{equation*}
    \begin{split}
        & D(i_1,\ldots ,i_m;\boldsymbol{x})=x^{n-r}\sum_{l_1+\cdots +l_m=n-r} \binom{n-r}{l_1,\ldots ,l_m} \mathds{E}W_1^{i_1+l_1}\cdots \mathds{E}W_m^{i_m+l_m} \\
         & =(-\boldsymbol{i})^n x^{n-r}\sum_{l_1+\cdots +l_m=n-r} \binom{n-r}{l_1,\ldots ,l_m}H_{i_1+l_1}(0)\cdots H_{i_m+l_m}(0),
     \end{split}
\end{equation*}
because $i_1+\cdots +i_m=r$. Hence, the conclusion follows from Corollary~\ref{c4.1}.
\end{proof}



\section*{References}

\bibliographystyle{elsarticle-num}
\bibliography{mybibfileJMAA}

\end{document}